\documentclass[11pt]{article}

\usepackage[T1]{fontenc}
\usepackage{lmodern}
\usepackage[margin=1in]{geometry}
\usepackage{amsmath,amssymb,amsthm}
\usepackage{enumitem}
\usepackage[hidelinks]{hyperref}
\usepackage{cite}
\usepackage{xcolor}
\theoremstyle{plain}
\newtheorem{theorem}{Theorem}[section]
\newtheorem{lemma}[theorem]{Lemma}

\newtheorem{conjecture}[theorem]{Conjecture}
\newtheorem{claim}[theorem]{Claim}
\newtheorem{observation}[theorem]{Observation}

\theoremstyle{definition}

\newtheorem{openproblem}[theorem]{Open Problem}

\newcommand{\chrom}{\chi}

\newcommand{\kcol}{$k$-\textsc{Colouring}}

\title{Obstructions to $k$-colouring $H$-free graphs}

\author{Iain Beaton$^{a}$ \and Ben Cameron$^{b,*}$ \and Adam van Omme$^{b}$}
\date{\small
$^{a}$Department of Mathematics and Statistics, Acadia University, Wolfville, NS B4P 2R6, Canada\\
$^{b}$School of Mathematical and Computational Sciences, University of Prince Edward Island, 550 University Avenue, Charlottetown, PE C1A 4P3, Canada\\[4pt]
$^{*}$Corresponding author. e-mail: brcameron@upei.ca\\[4pt]
\today}

\begin{document}
\maketitle

\begin{abstract}
A graph is $H$-free if it has no induced subgraph isomorphic to $H$. In 2020, Chudnovsky, Goedgebeur, Schaudt, and Zhong characterized all graphs $H$ such that there are only finitely many minimal obstructions to $3$-colouring $H$-free graphs. 
In general, the minimal obstructions to $k$-colouring $H$-free graphs are the $(k+1)$-vertex-critical $H$-free graphs, those are, the $H$-free graphs $G$ with $\chi(G)=k+1$ but $\chi(G-v)=k$ for every vertex in $G$. 
In this paper we complete the characterization for all $k > 4$ by showing that there are onky finitely $k$-vertex-critical $H$-free graphs if and only if $H$ is an induced subgraph of $P_4+\ell P_1$ for some $\ell \geq 0$.
\end{abstract}

\section{Introduction}\label{sec:intro}
A graph $G$ with vertex set $V$ is $k$-colourable if there exists a mapping $\varphi:V \rightarrow \{1, \ldots, k\}$ such that $\varphi(u) \neq \varphi(v)$ whenever $u \sim v$ (i.e., whenever $uv$ is an edge). 
The \kcol{} problem asks whether or not an input graph $G$ is $k$-colourable. 
The \kcol{} problem for $k \geq 3$ has been an active area of research since 1972, when it was shown to be NP-complete as one of Karp's original 21 problems~\cite{Karp1972}.
The chromatic number of a graph $G$, denoted $\chi(G)$, is the smallest $k$ such that $G$ is $k$-colourable.
Since $k$-colourability is a hereditary property (i.e., it is closed under taking induced subgraphs), when $G$ is not $k$-colourable, there exists an induced subgraph $H$ with $\chi(H)=k+1$ that obstructs $G$ from being $k$-colourable.
The minimal obstructions that prevent a graph from being $k$-colourable are called $(k+1)$-vertex-critical graphs, where a graph is $(k+1)$-vertex-critical if $\chi(G)=k+1$ but every proper induced subgraph of $G$ is $k$-colourable.
Clearly, there are infinitely many minimal obstructions to $k$-colourability in general for all $k\ge 2$, and there is no known efficient recognition algorithm for these minimal obstructions for any $k\ge 3$.
However, there are graph classes that contain only finitely many $(k+1)$-vertex-critical graphs for some (or all) $k\ge 4$. 
If such a graph class is hereditary, then there is a polynomial-time algorithm that provides ``no''-certificates for solving \kcol{} when the inputs are restricted to graphs from this graph class  by simply iteratively searching the input graph for each of the $(k+1)$-vertex-critical graphs as an induced subgraph and returning one, if found, as a certificate
(see~\cite{P5banner2019}, for example, for full details).
McConnell, Mehlhorn, N\"{a}her, and Schweitzer~\cite{McConnell2011} make convincing arguments that certifying algorithms are highly desirable given the ease with which they can be robustly tested; however, the certificate must be easily (i.e., efficiently) verifiable to provide value.

It is well known that a class of graphs is hereditary if and only if it is defined by a set of forbidden induced subgraphs.
We say a graph is $H$-free if it does not contain $H$ as an induced subgraph, and, more generally, a graph is $(H_1,H_2,\dots, H_n)$-free if it does not contain $H_i$ as an induced subgraph for all $i\in \{1,2,\dots,n\}$.
Bruce, Ho\`{a}ng, and Sawada~\cite{Bruce2009} initiated the study of critical graphs in hereditary graph classes, and their work was later extended by Maffray and Morel~\cite{MaffrayMorel2012}, who showed that there are exactly twelve $4$-vertex-critical $P_5$-free graphs.
This complete list was used to develop a simple linear-time certifying algorithm to solve $3$-\textsc{Colouring} for $P_5$-free graphs.
The vertex-critical graphs in the class of $P_5$-free graphs are of particular interest as Ho\`{a}ng, Kami{\'{n}}ski, Lozin, Sawada, and Shu~\cite{Hoang2010} developed polynomial-time \kcol{} algorithms for all $k$ for $P_5$-free graphs, where a valid $k$-colouring is returned if one exists.
Unfortunately, Ho\`{a}ng, Moore, Recoskie, Sawada, and Vatshelle~\cite{Hoang2015} showed that there are infinitely many $k$-vertex-critical $H$-free graphs when $H$ contains an induced $2P_2$, for each $k\ge 5$.
Thus, for $k\ge 5$, considerable research interest has been focused on the $k$-vertex-critical graphs in subclasses of $P_5$-free graphs, and finiteness for all $k$ has been shown for many such subclasses, including $(P_5,H)$-free graphs when $H$ is any $(2P_2,K_3+P_1)$-free graph of order $4$~\cite{KCameron2021}, any $(2P_2,K_3+P_1)$-free graph of order $5$ except for $\overline{P_3+2P_1}$, $K_5-e$, and $K_5$ for which the problem remains open)~\cite{Brause2022,Kaminski2019,AbuadasCameronHoangSawada2024,Jooken2026,Dhaliwal2017,CaiGoedgebeurHuang2023,Xiaetal2023,xia2024results,P5W4Journal,BelavadiHoang2026,BeatonCameron2026P5Hbipartite}, and various other graphs $H$ of order larger than $5$~\cite{xia2024results,Kaminski2019,BeatonCameron2026P5Hbipartite}.
While this extensive list of positive results might initially give the impression that most subclasses of $P_5$-free graphs contain only finitely many $k$-vertex-critical graphs for all $k$, some very restrictive subclasses do indeed contain infinitely many. 
It was recently shown, using the constructions due to the second author and Ho\`{a}ng~\cite{CameronHoang2023P5C5}, that there are infinitely many $k$-vertex-critical $(2P_2,K_3+P_1,C_5,\overline{C_6},\operatorname{net},\operatorname{co\text{-}net})$-free graphs for each $k\ge 6$~\cite{BeatonCameron2026P5Hbipartite}.
Further, there are infinitely many $k$-vertex-critical $H$-free graphs when $H$ contains an induced claw (see, for example,~\cite{CameronHoangSawada2022}) or cycle (following from~\cite{Erdos}) for all $k\ge 3$.

The class of $P_5$-free graphs is not a maximal hereditary class in which \kcol{} is polynomial-time solvable for all $k$ (assuming P$\neq$NP), as Couturier, Golovach, Kratsch, and Paulusma~\cite{Couturier2015} showed that the same is true for $(P_5+\ell P_1)$-free graphs for all $\ell \ge 0$.
Their algorithms also return a valid $k$-colouring, if one exists, to serve as a ``yes'' certificate.
Further, for $k=3$, i.e., the smallest value for which \kcol{} is NP-complete in general, there are also polynomial-time algorithms that solve \kcol{} and return a valid $3$-colouring, if one exists, for $P_6$-free graphs~\cite{RanderathSchiermeyer2010} and $rP_3$-free graphs~\cite{HajebiLiSpirkl2022}.
These results were further bolstered by the most comprehensive result on vertex-critical graphs in hereditary classes: the following dichotomy theorem of Chudnovsky, Goedgebeur, Schaudt, and Zhong~\cite{Chud4critical2020}.

\begin{theorem}[\cite{Chud4critical2020}]\label{thm:4critdichot}
    There are finitely many $4$-vertex-critical $H$-free graphs if and only if $H$ is an induced subgraph of $P_6$, $2P_3$, or $P_4+\ell P_1$ for any $\ell\ge 0$. 
\end{theorem}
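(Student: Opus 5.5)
This theorem is cited from~\cite{Chud4critical2020}; I would prove it in two directions.

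\emph{Necessity.} Suppose $H$ is not an induced subgraph of $P_6$, $2P_3$, or $P_4+\ell P_1$. Then I show that $H$ contains an induced subgraph from a small list $\mathcal{F}$, and that for each $F\in\mathcal{F}$ there is an infinite family of $4$-vertex-critical $F$-free graphs. A complete case analysis on $H$ should give the following list. If $H$ has a cycle, Erd\H{o}s-type constructions of $4$-critical graphs with large girth give infinitely many examples. If $H$ has a claw, the known infinite claw-free families apply. Otherwise $H$ is a linear forest, that is, a disjoint union of paths. A linear forest that is not an induced subgraph of the three listed graphs must contain one of a few minimal linear forests: $P_7$, $P_6+P_1$, $P_5+P_2$, $P_4+P_3$, $2P_3+P_1$, $3P_2$, $P_3+2P_2$, and so on. These can be reduced to a short list such as $P_7$, $3P_2$, $P_6+P_1$ and $P_4+P_2$. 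For each, I would exhibit an explicit infinite $4$-vertex-critical family, for example suitable circulants or Mycielski-like constructions, and verify freeness and criticality directly. The combinatorial reduction to minimal forbidden linear forests is routine bookkeeping.

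\emph{Sufficiency.} It suffices to treat the maximal graphs $P_6$, $2P_3$, and $P_4+\ell P_1$. Finiteness for $P_6$ is the heavy step. There I would use the structure of $P_6$-free graphs with respect to a dominating induced subgraph (a $C_5$, a $K_4$-free core, and so on) and bound the order of any $4$-vertex-critical $P_6$-free graph via the standard critical-graph tools. These tools are that there are no comparable (dominated) nonadjacent vertices, every vertex has degree at least $3$, and there are no clique cutsets. Together with an exhaustive computer search once an order bound is known, this settles the $P_6$ case.

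For $2P_3$, I would take a $4$-critical $2P_3$-free graph and an induced $P_3$ or triangle. I then argue that its non-neighbourhood is $P_3$-free, hence a disjoint union of cliques of size at most $3$. The same domination and no-comparable-vertices arguments then bound the size. For $P_4+\ell P_1$, I would induct on $\ell$. A $4$-critical graph that contains an induced $P_4+(\ell-1)P_1$ has the property that the non-neighbourhood of such a configuration is $P_4$-free after removing one vertex, and $P_4$-free critical graphs are complete graphs. A Ramsey argument on the bounded-degree structure, using $\omega\le 3$ and independent sets of bounded size, bounds the order. The $P_6$ case is by far the main obstacle; I would try first to reduce it to a computer-assisted enumeration via a proved order bound.
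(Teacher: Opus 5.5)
There are genuine gaps in both directions. The paper itself gives no proof here; the theorem is quoted from \cite{Chud4critical2020}. In the necessity direction, your list of minimal linear forests is wrong, and fixing it is not bookkeeping. If a linear forest $F$ is induced in none of $P_6$, $2P_3$, $P_4+\ell P_1$, then $F$ contains $2P_2+P_1$. If two components have an edge, then (since $F$ is not induced in $2P_3$) there is a third component or one of the two has at least $4$ vertices, and either way $2P_2+P_1$ appears. If exactly one component $P_a$ has an edge, then $a\ge 5$ (else $F$ is induced in $P_4+\ell P_1$). Then either $F$ has another component or $a\ge 7$ (else $F$ is induced in $P_6$), again giving $2P_2+P_1$. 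Since $2P_2+P_1$ is itself induced in none of the three graphs, it is the \emph{unique} minimal linear-forest obstruction. Every graph in your short list $P_7$, $3P_2$, $P_6+P_1$, $P_4+P_2$ properly contains $2P_2+P_1$ as an induced subgraph. So being free of one of them is weaker than being $(2P_2+P_1)$-free, and your reduction covers none of $H=2P_2+P_1$, $P_5+P_1$, $P_3+P_2+P_1$. Besides the cycle and claw cases, necessity needs exactly one thing: an explicit infinite family of $4$-vertex-critical $(2P_2+P_1)$-free graphs, with freeness and criticality verified. ``Suitable circulants or Mycielski-like constructions'' does not supply it, and that construction is essentially the whole content of this direction.

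In the sufficiency direction, reducing to $P_6$, $2P_3$ and $P_4+\ell P_1$ is fine, but none of the three cases is argued. The facts you list (no dominated non-adjacent pairs, minimum degree at least $3$, no clique cutsets) do not by themselves bound the order. For $2P_3$, the non-neighbourhood of an induced $P_3$, say $Q$, is indeed a disjoint union of cliques of size at most $3$. However, you bound neither the number of these cliques nor $|N(Q)|$, which is where the difficulty lies. For $P_4+\ell P_1$ the sketch fails as written. In a $(P_4+\ell P_1)$-free graph the non-neighbourhood of an induced $P_4+(\ell-1)P_1$ is empty, so you only learn that this configuration dominates $G$, and no degree bound has been established for a Ramsey argument to use. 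What must be bounded is $\alpha(G)$; Ramsey's theorem with $\omega(G)\le 3$ (for $G\neq K_4$) then finishes. The proof of Theorem~\ref{thm:P4UellP1} in Section~\ref{sec:P4ellP1proof} never uses $k\ge 5$ and does exactly this at $k=4$:
\begin{itemize}
\item the non-neighbourhood $U$ of an independent $\ell$-set is $P_4$-free;
\item the height and join-node fan-out of the reduced co-tree of $G[U]$ are bounded via the clique number;
\item the number of components of a disconnected module is bounded using the supports $J_x$, Ramsey's theorem on $(\ell+1)$-subsets of components, Lemma~\ref{lem:XY}, and recolouring a deleted component.
\end{itemize}
For $P_6$ (the subject of the separate paper \cite{Chudnovsky4criticalconnected2020}) and for $2P_3$, you describe a strategy rather than a proof.
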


This result was itself a strengthening of the same authors' earlier result that, for connected $H$, there are only finitely many $4$-vertex-critical $H$-free graphs if and only if $H$ is a connected induced subgraph of $P_6$~\cite{Chudnovsky4criticalconnected2020}.
A corollary of this and the previously stated results for $H$-free graphs when $H$ contains an induced claw, cycle, or $2P_2$ is that the only graphs $H$ for which there may be only finitely many $k$-vertex-critical $H$-free graphs for any $k>4$ are induced subgraphs of $P_4+\ell P_1$ for some $\ell\ge 0$. 
This inspired interest in studying the classes defined by forbidding these induced subgraphs. 
K.~Cameron, Goedgebeur, Huang, and Y. Shi~\cite{KCameron2021} showed that there are only finitely many $k$-vertex-critical $(P_3+P_1)$-free graphs for all $k$.
The second author, Ho\`{a}ng, and Sawada~\cite{CameronHoangSawada2022} showed that there are only finitely many $k$-vertex-critical $(P_2+\ell P_1)$-free graphs for all $k$ and $\ell$, and posed the following conjecture.

\begin{conjecture}[\cite{CameronHoangSawada2022}]\label{conj:P4UellP1}
    Fix $k\ge 5$. There are only finitely many $k$-vertex-critical $H$-free graphs if and only if $H$ is an induced subgraph of $P_4+\ell P_1$ for some $\ell\ge 0$.
\end{conjecture}

To close the gap further on this conjecture, Abuadas, the second author, Ho\`{a}ng, and Sawada~\cite{AbuadasCameronHoangSawada2024} showed that Conjecture~\ref{conj:P4UellP1} is true for all $k\ge 5$ when $H$ is $P_3+\ell P_1$ for every $\ell\ge 0$ and for the $(P_4+P_1,\overline{P_4+P_1})$-free graphs~\cite{AbuadasCameronHoangSawada2024}.
Thus, to solve Conjecture~\ref{conj:P4UellP1}, it remained only to determine for which values $k>4$ and $\ell>0$ it is true for $H=P_4+\ell P_1$.
In particular, the case $\ell=1$ and $k=5$ remained open.
This prompted the first and second authors to show that there are only finitely many $k$-vertex-critical $(P_4+P_1, H)$-free graphs for all graphs $H$ of order $4$ and to ask for which graphs $H$ of order $5$ is the same true~\cite{BeatonCameron2025cogemfreeord4finite}.
Belavadi and Karthick very recently answered this question in the affirmative by showing that the result holds for $H\in\{\text{paraglider},\text{dart},\text{house}\}$~\cite{KarthickBelavadi2026}.
Beyond $\ell=1$, the first and second authors showed a variety of graphs $H$ for which there are only finitely many $k$-vertex-critical $(P_4+\ell P_1,H)$-free graphs for all $k$~\cite{BeatonCameron2026IWOCA,BeatonCameron2026P5Hbipartite}. 
In this paper, we prove Conjecture~\ref{conj:P4UellP1} for all $k$ and $\ell$ with the following dichotomy theorem. 

\begin{theorem}\label{thm:P4UellP1}
    Fix $k\ge 5$. There are only finitely many $k$-vertex-critical $H$-free graphs if and only if $H$ is an induced subgraph of $P_4+\ell P_1$ for some $\ell\ge 0$.
\end{theorem}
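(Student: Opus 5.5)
The "only if" direction is already available from the results quoted in the introduction. Suppose $H$ is not an induced subgraph of $P_4+\ell P_1$ for any $\ell$. Then $H$ contains an induced claw, an induced cycle, or an induced $2P_2$: a graph with none of these is a linear forest in which at most one component has an edge, and that component is a path on at most four vertices. In each of these three cases there are infinitely many $k$-vertex-critical $H$-free graphs for every $k\ge 5$, by~\cite{Hoang2015}, \cite{CameronHoangSawada2022} and~\cite{Erdos}. For the "if" direction it suffices to prove that for each fixed $k\ge 5$ and $\ell\ge 0$ there are only finitely many $k$-vertex-critical $(P_4+\ell P_1)$-free graphs, since $H$-free graphs are $(P_4+\ell P_1)$-free whenever $H$ is an induced subgraph of $P_4+\ell P_1$.

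I will prove this by induction on $\ell$. The base case $\ell=0$ holds because a $P_4$-free vertex-critical graph is a cograph. A $k$-vertex-critical cograph is $K_k$: either the graph or its complement is disconnected, and criticality forces the graph to be a join of critical pieces, so inductively it is complete. For the step, let $G$ be $k$-vertex-critical and $(P_4+\ell P_1)$-free with $\ell\ge 1$, and assume the result for $\ell-1$. If $G$ is $(P_4+(\ell-1)P_1)$-free we are done, so $G$ contains an induced copy $Q$ of $P_4+(\ell-1)P_1$ on $\ell+3$ vertices. Every vertex outside $Q$ has a neighbour in $Q$, because otherwise $Q$ together with that vertex would form an induced $P_4+\ell P_1$. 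So $V(G)\setminus V(Q)$ splits into at most $2^{\ell+3}$ classes $S_A$, indexed by the nonempty sets $A\subseteq V(Q)$ of neighbours in $Q$. The goal is then to bound $|S_A|$ by a function of $k$ and $\ell$.

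Fix a vertex $q\in A$. Each $S_A$ lies in $N(q)$, which is $(k-1)$-colourable, and the induced subgraph $G[S_A]$ is again $(P_4+\ell P_1)$-free. I would try a nested induction: either $G[S_A]$ is $(P_4+(\ell-1)P_1)$-free, or it contains a further copy of $P_4+(\ell-1)P_1$, which we can anchor and use to split $S_A$ again. The chromatic number decreases as we pass into neighbourhoods, so the depth of this recursion is bounded by $k$. This leaves the bounded-chromatic, bounded-$\ell$ pieces that cannot be split further. To bound those, I would use the standard vertex-critical tools. First, no two nonadjacent vertices $u,v$ satisfy $N(u)\subseteq N(v)$. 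More generally, no sets $X,Y$ with $|X|=|Y|$ are such that $X$ is dominated by $Y$ in the sense of the Cai--Goedgebeur--Huang homogeneous-set lemma. Second, criticality gives $\delta(G)\ge k-1$. Inside a piece where the structure forces a $P_4$-free (cograph) pattern relative to $Q$, an unbounded piece would contain a large homogeneous clique or stable set. Ramsey arguments then produce comparable vertices, contradicting criticality.

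The main obstacle is bounding $|S_A|$ when $G[S_A]$ still contains induced $P_4$'s. Forbidding $P_4+\ell P_1$ only restricts vertices that are nonadjacent to an entire $P_4$, which is a weak condition. My first attempt would be to use the extra vertices of $Q$ that are not in $A$. Any vertex of $S_A$ misses them, so a $P_4$ inside $S_A$ together with suitable isolated vertices from $Q\setminus A$ and from other classes would yield $P_4+\ell P_1$. This should force $G[S_A]$ to be $(P_4+(\ell-|Q\setminus A|)P_1)$-free, which sets up the induction on $\ell$. The hard case is when $A$ covers nearly all of $Q$. There I would expect to need a separate argument, likely exploiting that $S_A$ is complete to a $P_4$ and so has clique number at most $k-3$, and then recursing on $k$ via the known finiteness for $P_3+\ell P_1$~\cite{AbuadasCameronHoangSawada2024}.
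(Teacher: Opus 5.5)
Your ``only if'' direction is fine: a graph with no induced claw, cycle or $2P_2$ is a linear forest with at most one non-trivial component, and that component is a path on at most four vertices. The reduction to $H=P_4+\ell P_1$ and the base case $\ell=0$ are also fine.

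The gap is the inductive step, which is where all the content of the theorem lies. Your induction hypothesis concerns \emph{vertex-critical} $(P_4+(\ell-1)P_1)$-free graphs. But $G[S_A]$, and the pieces left at the bottom of your nested recursion, are not critical, so the hypothesis says nothing about their size. $(P_4+(\ell-1)P_1)$-free graphs of bounded chromatic number (stable sets, for instance) can be arbitrarily large. The same objection rules out the appeal to finiteness for $P_3+\ell P_1$ in your ``hard case''; you also give no reason why $G[S_A]$ would be $(P_3+\ell P_1)$-free.

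Your recursion does legitimately show that, apart from boundedly many anchor vertices, $V(G)$ splits into boundedly many pieces, each inducing a $(P_4+(\ell-1)P_1)$-free graph. (The successive anchors form a clique, so the depth is bounded in terms of $k$.) But bounding one non-critical piece using the criticality of the whole of $G$ is precisely the problem, and induction on $\ell$ does not help with it. A minor point: avoiding $Q\setminus A$ only yields $(P_4+(\ell-t)P_1)$-freeness with $t=\alpha(G[Q\setminus A])$, not $t=|Q\setminus A|$.

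Likewise, ``an unbounded piece contains a large homogeneous stable set, and Ramsey then gives comparable vertices'' does not go through. Vertices outside the piece can distinguish the members of a large stable set, so no pair $u,v$ with $N(u)\subseteq N(v)$ need exist. The missing idea is a way to control how outside vertices see many parts of a piece simultaneously.

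The paper does this without induction. It fixes a stable set $S$ of size $\ell$, so the non-neighbourhood $U$ induces a cograph, and bounds $|U|$. Then $\alpha(G)\le\ell+|U|$, and Ramsey bounds $|V(G)|$. In the co-tree of $G[U]$, the depth and the fan-out of join nodes are bounded by the clique number. The real work is bounding the number of components $C_1,\dots,C_m$ of a disconnected module $W$.
\begin{itemize}
\item For $x\in N(S)$, let $J_x$ index the components not complete to $x$.
\item $(P_4+\ell P_1)$-freeness forces each $x$ with $|J_x|>\ell$ to be complete or anticomplete to each $C_i$.
\item It also forces incomparable large supports (of vertices attached to the same $s_a$) to share fewer than $\ell$ indices, and vertices with strictly nested supports to be adjacent. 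Hence the large supports fall into at most $\ell(k-1)$ antichains.
\item Hypergraph Ramsey on $(\ell+1)$-sets of indices then gives $(k-1)\ell+2$ components of equal chromatic number such that each $x$ is either anticomplete to all of them or complete to all but at most $\ell$.
\end{itemize}
Criticality is then exploited by recolouring, not just by domination. Delete one such component $C_{i_0}$ and $(k-1)$-colour the rest. Find another component $C_j$ carrying no ``exceptional'' colour, and colour $C_{i_0}$ with the palette used on $C_j$. If no colour is exceptional, these components have identical neighbourhoods and Lemma~\ref{lem:XY} applies. Nothing playing this role appears in your proposal.
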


As noted above, this implies the existence of polynomial-time certifying algorithms to solve \kcol{} for the class of $(P_4+\ell P_1)$-free graphs for all $k\ge 1$ and $\ell\ge 0$.

The rest of this paper proceeds as follows.
Theorem~\ref{thm:P4UellP1} is proved in Section~\ref{sec:P4ellP1proof}.
We first provide some preliminary results, definitions, and notation in Section~\ref{sec:prelim}.
We conclude the paper in Section~\ref{sec:conc} with directions for future work.

\section{Preliminaries}\label{sec:prelim}
    For standard graph theory notation, we will follow~\cite{WestGraphTheoryBook}; however, we include some definitions and notations now to aid the reader.  
    For a graph $G$ and a subset of its vertices, $S$, we let $N(S)$ denote the open neighbourhood of $S$, $N[S]$ the closed neighbourhood, and $G[S]$ the subgraph induced by $S$.
    We call the set $V(G)\setminus N[S]$ the \textit{non-neighbourhood} of $S$.
    Further, we say that a subset $X$ of the vertices of a graph $G$ is \textit{complete} (respectively, \textit{anticomplete}) to $S$ if $x\sim s$ (respectively, $x\nsim s$) for all $x\in X$ and $s\in S$.
    If $X$ is complete (or anticomplete) to $S$ and $x\in X$, we also say that the vertex $x$ is complete (or anticomplete) to $S$.
    The set $S$ is a \emph{module} if every vertex of $V(G)\setminus S$ is either complete or anticomplete to $S$.
    For graphs $G$ and $H$, we let $G+H$ denote the disjoint union of $G$ and $H$.
    For a non-negative integer $m$, we let $mG$ denote the disjoint union of $m$ copies of $G$.

    For positive integers $\ell,c,t$, write $R^{(\ell)}_c(t)$ for the least integer
    $N$ such that every colouring of the $\ell$-element subsets of an $N$-element
    set with at most $c$ colours admits a $t$-element subset all of whose
    $\ell$-element subsets receive the same colour. The
    existence and finiteness of $R^{(\ell)}_c(t)$ is Theorem $B$ from Ramsey's seminal paper~\cite{Ramsey}.

    The proof of Theorem \ref{thm:P4UellP1} heavily relies on the structure of co-trees of co-graphs (i.e. $P_4$-free graphs). The co-tree of a co-graph is formed as follows.
    The leaves of the co-tree are the vertices of the co-graph. Choose a clique (or independent set) $S$ of maximal size such that $S$ is a module. 
    Collapse $S$ into a single vertex and label it 1 when $S$ is a clique and $0$ when $S$ is an independent set. 
    Add this new vertex to the co-tree and join it to each vertex in the co-tree corresponding to the vertices in $S$. 
    Repeat this process on the new graph until you get to a single vertex. 
    Note depending on your choice of modules, many co-trees can be obtained from the same co-graph.
    However, there is always a unique co-tree such that every root to leaf path alternates between 0 and 1 vertices \cite{CORNEIL1981163}. 
    We call such a co-tree \emph{reduced}.
    We will exploit the following facts about co-trees

\begin{observation}
    Let $G$ be a $P_4$-free graph and $\mathcal{T}$ be the reduced co-tree of $G$. Then

    \begin{itemize}
        \item Every vertex in a co-tree corresponds to a module in $G$.
        \item Every non-leaf vertex in a co-tree has at least two children.
        \item Every root to leaf path in a reduced co-tree alternates between 0 and 1 vertices. (See \cite{CORNEIL1981163})
    \end{itemize}
\end{observation}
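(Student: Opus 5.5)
We would argue by induction along the construction of the co-tree, tracking two things at each step: the current contracted graph, and for each current vertex the set of original vertices (leaves of $\mathcal{T}$) lying below it.

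\textbf{The construction is well defined.} Suppose the current graph $G'$ is $P_4$-free with at least two vertices. Then $G'$ or $\overline{G'}$ is disconnected, by the standard characterization of cographs. Recursing into a component of $G'$ or $\overline{G'}$ with at least two vertices, we eventually reach two vertices $u,v$ with $N(u)\setminus\{v\}=N(v)\setminus\{u\}$. Such a pair $\{u,v\}$ is a module that is either a clique or an independent set. Hence a module $S$ of the required type with $|S|\ge 2$ always exists. Contracting a module that is a clique or an independent set gives a graph isomorphic to $G'-(S\setminus\{s\})$ for any $s\in S$. This is an induced subgraph of $G'$, so it is again $P_4$-free, and the process continues until one vertex remains.

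\textbf{Every node is a module of $G$.} For a node $x$, let $L(x)$ be the set of leaves below $x$. We show by induction on the step at which $x$ is created that $L(x)$ is a module of $G$. Let $S$ be the set collapsed to form $x$, so $L(x)=\bigcup_{y\in S}L(y)$. Each $L(y)$ is a module of $G$ by induction. Take any $w\notin L(x)$, and let $z$ be the current vertex whose leaf set contains $w$; then $z\notin S$. Since $S$ is a module of the current graph, $z$ is adjacent to all of $S$ or to none of it. Adjacency between current vertices $z$ and $y$ means that $L(z)$ is complete, respectively anticomplete, to $L(y)$; this invariant is maintained because all the sets involved are modules. Therefore $w$ is complete or anticomplete to $L(x)$.

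\textbf{Every non-leaf node has at least two children.} This is immediate, since we only ever collapse sets $S$ with $|S|\ge 2$, and such sets exist by the first step.

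\textbf{Alternation in the reduced co-tree.} This is the content of~\cite{CORNEIL1981163}, which we would cite. In outline: if a node labelled $1$ has a child labelled $1$, the child's clique-join can be absorbed into the parent. Repeating this for both labels yields a co-tree in which every root-to-leaf path alternates. This tree is unique because it coincides with the canonical decomposition in which the root is labelled $1$ exactly when $G$ is disconnected in the complement, and its children are the co-components; each child is then decomposed recursively.

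\textbf{Main obstacle.} The only real subtlety is the second step: lifting modules from the contracted graph back to $G$. This depends on the invariant that adjacency between current vertices means full completeness, respectively anticompleteness, between their leaf sets. We would state that invariant explicitly and carry it through the induction.
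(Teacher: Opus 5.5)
Your proposal is correct and takes the same route as the paper, which regards the first two bullets as immediate from the collapsing construction and cites Corneil, Lerchs, and Burlingham for the alternation and uniqueness of the reduced co-tree; your invariant, that adjacent current vertices have complete leaf sets and non-adjacent ones have anticomplete leaf sets, is exactly what makes the module claim rigorous. The one line worth adding is that absorbing a same-label child into its parent leaves the leaf sets of all surviving nodes unchanged and only increases the parent's number of children, so the first two bullets also hold for the reduced tree $\mathcal{T}$ itself.
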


    The following two lemmas will be useful in the proof of our main result.

    \begin{lemma}[\cite{P5W4Journal}]\label{lem:edgemixhomo}
        For disjoint sets $H, A \subseteq V(G)$, if $v \in A$ is not mixed on any edge in $H$, then $v$ is not mixed on any connected component of $H$.   
    \end{lemma}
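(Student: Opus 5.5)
We argue by contradiction using a path inside the offending component. Recall that $v$ is \emph{mixed} on a set $S$ if $v$ has at least one neighbour and at least one non-neighbour in $S$; in particular, $v$ is mixed on an edge $xy$ exactly when $v$ is adjacent to exactly one of $x$ and $y$. Since $H$ and $A$ are disjoint, $v\notin H$, so adjacency between $v$ and each vertex of $H$ is well defined.

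Suppose $v$ is mixed on some connected component $C$ of $G[H]$. Then there are vertices $x,y\in C$ with $v\sim x$ and $v\nsim y$. Because $C$ is connected, fix a path
\[
x=u_0,u_1,\dots,u_m=y
\]
in $G[C]$, where $m\ge 1$ since $x\neq y$. Every consecutive pair $u_{i-1}u_i$ is an edge of $H$.

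Next we walk along this path. The predicate ``$v\sim u_i$'' is true at $i=0$ and false at $i=m$, so there is a least index $j\ge 1$ with $v\nsim u_j$. By minimality, $v\sim u_{j-1}$. Hence $v$ is adjacent to exactly one endpoint of the edge $u_{j-1}u_j$ of $H$. This means $v$ is mixed on an edge of $H$, contradicting the hypothesis.

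Therefore $v$ is either complete or anticomplete to every connected component of $H$, as claimed. The only step needing care is this discrete intermediate-value argument along the path. It is immediate once the path is fixed, so we expect no real obstacle.
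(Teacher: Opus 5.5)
Your proof is correct. The paper gives no proof of its own and imports the lemma from \cite{P5W4Journal}. Your argument is the standard one: walk along a path in $G[C]$ from a neighbour of $v$ to a non-neighbour and take the first non-neighbour. It also directly yields the mixed edge $uw$ that the paper invokes in Claim~\ref{cla:uniform}.
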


    \begin{lemma}[\cite{KCameron2021}] \label{lem:XY}
		Let $G$ be a $k$-vertex-critical graph. There does not exist two vertex subsets $X, Y \subseteq V(G)$ satisfying all of the following conditions.
		\begin{itemize}
			\item $X$ and $Y$ are anticomplete to each other.
			\item $\chi(G[X])\le\chi(G[Y])$.
			\item Y is complete to $N(X)$.
		\end{itemize}
	\end{lemma}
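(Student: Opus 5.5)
The plan is a direct recolouring argument: assuming such $X$ and $Y$ exist, we build a $(k-1)$-colouring of all of $G$, contradicting $\chi(G)=k$. Throughout we take $X$ to be nonempty; this is implicit in the statement, since the conditions hold trivially for $X=\emptyset$.

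\textbf{Step 1: colour $G-X$.} Since $X\neq\emptyset$ and $G$ is $k$-vertex-critical, the proper induced subgraph $G-X$ is $(k-1)$-colourable. Fix a proper $(k-1)$-colouring $\varphi$ of $G-X$. Because $X$ and $Y$ are disjoint (they are anticomplete), $Y\subseteq V(G)\setminus X$, so $\varphi$ colours all of $Y$. Let $C=\varphi(Y)$ be the set of colours used on $Y$. Since $\varphi$ restricted to $Y$ is a proper colouring of $G[Y]$, we get
\[
|C|\ \ge\ \chi(G[Y])\ \ge\ \chi(G[X]).
\]

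\textbf{Step 2: observe that $N(X)$ avoids $C$.} Every vertex $u\in N(X)$ lies outside $X$ by definition of the open neighbourhood, so $\varphi(u)$ is defined. By hypothesis $u$ is adjacent to every vertex of $Y$. Also $u\notin Y$, because $Y$ is anticomplete to $X$ while $u$ has a neighbour in $X$. Hence $\varphi(u)\ne\varphi(y)$ for all $y\in Y$, that is, $\varphi(u)\notin C$.

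\textbf{Step 3: colour $X$ with $C$ and extend.} Take an optimal colouring of $G[X]$ using $\chi(G[X])$ colours, and injectively rename these colours into $C$; this is possible by the inequality in Step 1. Combine this colouring of $X$ with $\varphi$ on $V(G)\setminus X$. Edges inside $X$ and edges inside $V(G)\setminus X$ are properly coloured by construction. Every edge between $X$ and its complement has its outside endpoint in $N(X)$. That endpoint has a colour outside $C$ by Step 2, while the endpoint in $X$ has a colour in $C$. The result is therefore a proper $(k-1)$-colouring of $G$, contradicting $\chi(G)=k$.

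There is no substantial obstacle here. The only points needing care are the implicit nonemptiness of $X$ (needed to invoke criticality) and the check in Step 2 that $N(X)$ is disjoint from $Y$, which is exactly where the anticompleteness hypothesis enters.
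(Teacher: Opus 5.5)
Your proof is correct, and it is the standard recolouring argument for this lemma from the cited source; the paper itself only quotes the lemma. You colour $G-X$ by criticality, note that $N(X)$ misses the at least $\chi(G[Y])\ge\chi(G[X])$ colours appearing on $Y$, and recolour $X$ from those colours.

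One small correction: under the paper's definition, anticompleteness does not by itself force $X\cap Y=\emptyset$. For instance, $X=Y=\{v\}$ in $K_k$ satisfies all three conditions. So disjointness, like nonemptiness of $X$, is an implicit hypothesis rather than a consequence; it does hold in the paper's application, where $X$ and $Y$ are distinct components $C_i, C_j$.
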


\section{$k$-vertex-critical $(P_4+\ell P_1)$-free graphs}
\label{sec:P4ellP1proof}

In this section, we will prove our main theorem.

\begin{proof}[Proof of Theorem~\ref{thm:P4UellP1}] Let $G$ be a $k$-vertex-critical $(P_4+\ell P_1)$-free graph.
We will bound $\alpha(G)$ and hence bound $|V(G)|$ through Ramsey's theorem~\cite{Ramsey} since $G$ is necessarily $K_{k+1}$-free since it is $k$-vertex-critical.
Note that it is sufficient to bound the size of the non-neighbourhood of any independent set of size $\ell$. 
Fix an independent set $S=\{s_1,\dots,s_\ell\}$ (noting that if $\alpha(G)<\ell$, then we are done) and let
\[
U=V(G)\setminus N[S],\qquad X=N(S).
\]
Note that $G[U]$ is $P_4$-free and hence a co-graph.
Thus, it suffices to show the reduced co-tree of $G[U]$ has bounded order.
Moreover it suffices to bound the depth and number of children of each non-leaf vertex in the reduced co-tree of $G[U]$.
Call this reduced co-tree $\mathcal{T}$.

We first bound the height of $\mathcal{T}$ by $2(k-1)-1$.
Suppose the height is at least $2(k-1)$.
Choose a vertex $v$ of maximum depth in $\mathcal T$.
Note that $v$ is necessarily a leaf and therefore also vertex of $G[U]$.
Further, let $P_v$ denote the unique path from $v$ to the root of $\mathcal T$, which necessarily alternates vertices labeled 0 (disjoint union vertices) and 1 (join vertices), although not necessarily starting with 0.
So there are at least $k-1$ join vertices on $P_v$. Let $\{j_1,j_2,\dots j_{k-1}\}$ be a set of join vertices on $P_v$.
At each $j_i$, traversing a path from $j_i$ through a child that is not on $P_v$ until we arrive at a leaf vertex, which we call $k_i$.
Note that $v\neq k_i$ and $k_i\neq k_{i'}$ since the path from each $j_i$ to $k_i$ intersects $P_v$ only in the vertex $j_i$.
Now, let $K = \{v, k_1,k_2,k_3,\dots,k_{k-1}\}$.   
Since each vertex in $K$ was chosen as a descendant of a join vertex, $K$ induces $K_k$ in $G[U]$, a proper subgraph of $G$, which contradicts the fact that $G$ is $k$-vertex-critical.

We now bound the number children of each non-leaf vertex in the reduced co-tree. 
Note each non-leaf vertex corresponds to a module of $G[U]$ which is either a join or a disjoint union of smaller modules in $G[U]$.
Thus, if a join vertex has more than $k-1$ children, $G[U]$ and hence $G$ would contain an induced $K_k$ contradicting the fact that $G$ is $k$-vertex-critical.
Thus, it suffices to bound the number of children in a disjoint union vertex.
More specifically, we will bound the number of components in any module in $G[U]$.
The remainder of the proof is a series of claims dedicated to proving this outstanding assertion in Claim~\ref{cla:boundmodulecomponents}.

Fix a module $W$ of $U$ such that $G[W]$ is a disconnected graph with components $C_1,\dots,C_m$. 
For each $x\in X$,
define its \emph{support}, denoted $J_x$, as
\[
J_x=\{i\in[m]:x\text{ has a non-neighbour in }C_i\}.
\]
Thus $i\notin J_x$ means that $x$ is complete to $C_i$. A support is called \emph{large} if its size exceeds $\ell$.

\begin{claim}
\label{cla:uniform}
If $x\in X$ is mixed on a component $C_i$, then $|J_x|\le\ell$. Consequently, a
vertex with large support is either complete or anticomplete to each
component, and is anticomplete to precisely the components indexed by its
support.
\end{claim}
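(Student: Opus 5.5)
The plan is to find an induced $P_4+\ell P_1$ directly. Suppose $x\in X$ is mixed on $C_i$ and, for contradiction, $|J_x|\ge \ell+1$.

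\emph{Building the $P_4$.} Since $x$ is mixed on the connected graph $C_i$, Lemma~\ref{lem:edgemixhomo} gives an edge $ab$ of $C_i$ with $x\sim a$ and $x\nsim b$. Next, $x\in X=N(S)$, so $x$ has a neighbour $s\in S$. Every vertex of $U$ is anticomplete to $S$, so $s\nsim a$ and $s\nsim b$. Hence $s\,x\,a\,b$ is an induced $P_4$.

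\emph{Choosing the $\ell$ isolated vertices.} Because $|J_x|\ge\ell+1$, there are at least $\ell$ indices $j\in J_x\setminus\{i\}$. For each such $j$, pick a non-neighbour $c_j\in C_j$ of $x$; this gives $\ell$ vertices $c_{j_1},\dots,c_{j_\ell}$. We check they are isolated from everything else in the configuration.
\begin{itemize}
\item They lie in distinct components of $G[W]$, so they are pairwise nonadjacent.
\item They are anticomplete to $C_i$, so in particular they miss $a$ and $b$.
\item They lie in $U$, so they miss $s$.
\item They were chosen as non-neighbours of $x$.
\end{itemize}
Thus $\{s,x,a,b,c_{j_1},\dots,c_{j_\ell}\}$ induces $P_4+\ell P_1$, a contradiction. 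Therefore $|J_x|\le \ell$.

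\emph{The consequence.} If $|J_x|>\ell$, then by the first part $x$ is mixed on no component, so it is complete or anticomplete to each $C_i$. By definition of support, $x$ is complete exactly to the components indexed outside $J_x$. Since $x$ is not mixed on any component, having a non-neighbour in $C_i$ means being anticomplete to $C_i$. So $x$ is anticomplete precisely to the components indexed by $J_x$.

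The only step needing any care is the choice of the $\ell$ extra vertices. They must come from components other than $C_i$, which is why we need $|J_x|\ge \ell+1$ rather than $\ge\ell$. Everything else is immediate from the fact that $U$ is anticomplete to $S$ and that distinct components of $G[W]$ are anticomplete to one another.
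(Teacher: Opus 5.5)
Your proof is correct and matches the paper's argument: the induced $P_4$ $s\,x\,a\,b$ from Lemma~\ref{lem:edgemixhomo}, together with one vertex from each of $\ell$ components indexed by $J_x\setminus\{i\}$, yields an induced $P_4+\ell P_1$. You are slightly more careful than the paper, which takes an arbitrary vertex of each $C_{j_h}$, whereas the argument needs a non-neighbour of $x$ there, as you choose (one exists because $j_h\in J_x$).
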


\begin{proof}[Proof of Claim~\ref{cla:uniform}]
Let $x\in X$ such that $x$ is mixed on a component $C_i$.
Choose $s_a\in S$ adjacent to $x$.
Since $C_i$ is connected, there are adjacent vertices $u,w\in C_i$ with $xu\in E(G)$
and $xw\notin E(G)$ from Lemma~\ref{lem:edgemixhomo}. The vertices $s_a,x,u,w$, in that order, induce $P_4$. If $J_x$ contained $\ell$ indices other than $i$, then let $j_1,j_2,\dots j_{\ell}\in J_{x}\setminus\{i\}$.
Let $x_{h}$ be a vertex in $C_{j_h}$ for each $h\in \{1,2,\ldots,\ell\}$. Now, $\{s_a,x,u,w,x_1,x_2,\dots, x_{\ell}\}$ induces $P_4+\ell P_1$, a contradiction. Therefore, $|J_x|\le\ell$.
\end{proof}

Partition $X$ into $X_1,\dots,X_\ell$ by assigning each $x\in X$ to one
adjacent $s_a$ (breaking ties arbitrarily if $x$ is adjacent to more than
one vertex of $S$). Thus, $X_a\subseteq N(s_a)$.
For each $a\in \{1,2,\dots,\ell\}$, consider the family $\mathcal J_a$ of distinct large supports
$J_x$ with $x\in X_a$, that is,  $\mathcal{J}_a=\{J_x:|J_x|>\ell \text{ and } x\in X_a\}$. 
Recall that every $x\in X_a$ with a large support $J_x$ is necessarily complete or anticomplete to every component $C_1,C_2,\dots C_m$ by Claim~\ref{cla:uniform}.

\begin{claim}\label{cla:intersection}
    For each $a\in \{1,2,\dots \ell\}$, two incomparable members of $\mathcal J_a$ intersect in at most $\ell-1$ indices.
\end{claim}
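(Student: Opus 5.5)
The plan is to argue by contradiction. Suppose $x,y\in X_a$ have large supports $J_x,J_y\in\mathcal J_a$ that are incomparable and satisfy $|J_x\cap J_y|\ge \ell$. From these we will build an induced $P_4+\ell P_1$.

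Since $J_x$ and $J_y$ are large, Claim~\ref{cla:uniform} applies to both $x$ and $y$. Each is complete to every component outside its support and anticomplete to every component inside its support. Incomparability gives indices $i\in J_x\setminus J_y$ and $j\in J_y\setminus J_x$. Thus $x$ is anticomplete to $C_i$ and complete to $C_j$, while $y$ is complete to $C_i$ and anticomplete to $C_j$. Pick $c_i\in C_i$ and $c_j\in C_j$. They are nonadjacent because they lie in distinct components of $G[W]$. Next choose $\ell$ distinct indices $h_1,\dots,h_\ell\in J_x\cap J_y$ and vertices $z_t\in C_{h_t}$. Since $i\notin J_y$ and $j\notin J_x$, none of the $h_t$ equals $i$ or $j$. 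Hence the $z_t$ are pairwise nonadjacent and nonadjacent to $c_i$ and $c_j$. Moreover each $z_t$ is anticomplete to both $x$ and $y$, since $h_t$ lies in both supports.

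We now split into cases according to whether $x\sim y$.
\begin{itemize}
\item If $x\sim y$, then $c_i,y,x,c_j$ induce a $P_4$, because $c_i\nsim x$, $c_j\nsim y$ and $c_i\nsim c_j$.
\item If $x\nsim y$, we use $s_a$, which is adjacent to both $x$ and $y$ since $X_a\subseteq N(s_a)$. Then $y,s_a,x,c_j$ induce a $P_4$: we have $y\nsim c_j$, and $s_a\nsim c_j$ because $c_j\in U=V(G)\setminus N[S]$.
\end{itemize}

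In either case we add $z_1,\dots,z_\ell$. They are anticomplete to the four path vertices: to $x$ and $y$ by the support condition, to $c_i$ and $c_j$ because they lie in different components, and to $s_a$ because they lie in $U$. So we obtain an induced $P_4+\ell P_1$, a contradiction.

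The main point needing care is that the $\ell$ isolated vertices avoid the components $C_i$ and $C_j$ used in the path. This is exactly where incomparability, rather than mere distinctness, of the supports is used, and it is also why the case $x\nsim y$ must route the path through $s_a$ instead of through both $c_i$ and $c_j$.
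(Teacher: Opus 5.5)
Your proposal is correct and follows the paper's proof essentially verbatim. It makes the same choice of $i\in J_x\setminus J_y$ and $j\in J_y\setminus J_x$, takes the same $\ell$ witnesses from components indexed by $J_x\cap J_y$, and uses the same two induced $P_4$'s (through $x$ and $y$ when $x\sim y$, through $s_a$ when $x\nsim y$), only written in the reverse vertex order.
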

\begin{proof}[Proof of Claim~\ref{cla:intersection}]
    Suppose $J_x,J_y\in\mathcal J_a$ are incomparable (i.e, neither is a subset of the other) and their intersection has at least $\ell$ indices.
    Since they are incomparable, choose $i\in J_x\setminus J_y$ and
    $j\in J_y\setminus J_x$. Choose
    $u\in C_i$, $w\in C_j$, and one vertex from
    each of $\ell$ components indexed by a fixed $\ell$-subset of $J_x\cap J_y$, call these $\ell$ vertices $Z$.
    Note by definition of $\mathcal{J}_a$ we have $s_a \sim x$ and $s_a \sim y$.
    Now, $Z$ is an independent set since each vertex is from a distinct component of $G[W]$ and anticomplete to $u,w,s_a, x, y$.
    If $x\sim y$, then $\{w,x,y,u\}\cup Z$ induces $P_4+\ell P_1$, a contradiction, and if $x\nsim y$, then $\{w,x,s_a,y\}\cup Z$ induces $P_4+\ell P_1$, a contradiction.
\end{proof}

\begin{claim}\label{cla:largesupportsinduceedge}
    Let $x,y\in X_a$ and  $J_x,J_y\in \mathcal J_a$ with $x\neq y$ for some $a\in \{1,2,\dots,\ell\}$. If $J_x\subset J_y$, then $xy\in E(G)$.
\end{claim}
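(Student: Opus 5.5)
We argue by contradiction: assume $J_x\subsetneq J_y$, both in $\mathcal J_a$, and $x\nsim y$, and produce an induced $P_4+\ell P_1$ as in Claim~\ref{cla:intersection}. By Claim~\ref{cla:uniform}, since $|J_x|,|J_y|>\ell$, each of $x$ and $y$ is complete or anticomplete to every component $C_1,\dots,C_m$. More precisely, $x$ is anticomplete exactly to the components indexed by $J_x$ and complete to all the others, and the same holds for $y$ with $J_y$.

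Since the inclusion is proper, we can pick $j\in J_y\setminus J_x$. Then $y$ is anticomplete to $C_j$ and $x$ is complete to $C_j$. We also need a component complete to $y$, and this is where we use that $W$ is a module of $G[U]$ whose components all lie in $U$. Since $J_y$ is large, we cannot guarantee that some index lies outside $J_y$, so we instead use $s_a$ as the fourth vertex of the path. Fix $w\in C_j$. The vertices $w,x,s_a,y$ then induce a $P_4$:
\begin{itemize}
\item $w\sim x$ because $j\notin J_x$;
\item $x\sim s_a$ and $s_a\sim y$ because $x,y\in X_a\subseteq N(s_a)$;
\item $x\nsim y$ by assumption;
\item $w\nsim y$ because $j\in J_y$;
\item $w\nsim s_a$ because $w\in U=V(G)\setminus N[S]$.
\end{itemize}

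Next we choose the $\ell$ isolated vertices. Since $|J_x|>\ell$, we can fix $\ell$ indices $i_1,\dots,i_\ell\in J_x\subseteq J_y$, all different from $j$ because $j\notin J_x$. Pick $z_h\in C_{i_h}$ and let $Z=\{z_1,\dots,z_\ell\}$. Then $Z$ is independent, since its vertices lie in distinct components of $G[W]$, and it is anticomplete to $w$ for the same reason. It is anticomplete to $x$ and $y$ because the indices lie in $J_x\cap J_y$ and, by Claim~\ref{cla:uniform}, both vertices are anticomplete to those components. Finally, $Z$ is anticomplete to $s_a$ because $Z\subseteq U$.

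Hence $\{w,x,s_a,y\}\cup Z$ induces $P_4+\ell P_1$, a contradiction, so $xy\in E(G)$. The only delicate point is making sure the path avoids needing a component complete to $y$; routing the path through $s_a$ handles this. We should also check that $J_x\subset J_y$ is meant as proper containment, since otherwise no index $j$ exists. If equality were allowed, $J_x=J_y$, and the claim as stated could fail: two nonadjacent vertices with identical supports give no induced $P_4$. So we interpret $\subset$ as strict.
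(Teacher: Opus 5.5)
Your proof is correct and is essentially the paper's argument: you pick $j\in J_y\setminus J_x$ and $w\in C_j$, use the induced path $w,x,s_a,y$, and add one vertex from each of $\ell$ components indexed by $J_x\subseteq J_y$ to get an induced $P_4+\ell P_1$. Your explicit non-adjacency checks and your reading of $\subset$ as strict (which the paper also uses) are fine; the only quibble is that the module property of $W$ plays no role here, only the fact that $C_j\subseteq U$, which gives $w\nsim s_a$.
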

\begin{proof}[Proof of Claim~\ref{cla:largesupportsinduceedge}]
    Suppose $J_x\subset J_y$ are large supports of $x,y\in X_a$ and suppose by way of contradiction that $xy$ is not an edge.
    Since the containment is strict, choose $i\in J_y\setminus J_x$, and
    $u\in C_i$. 
    Since $i\in J_y$, $y$ is anticomplete to $C_i$ by
    Claim~\ref{cla:uniform}, so $yu\notin E(G)$. Since $i\notin J_x$ then $x$ is
    complete to $C_i$, so $xu\in E(G)$. Thus, $\{u,x,s_a,y\}$ induces a $P_4$ (in that order). 
    For a fixed $\ell$-subset $J'$ of $J_x$, choose one vertex from each component indexed by $J'$ and call these vertices $Z'$. We now have $\{u,x,s_a,y\}\cup Z'$ inducing a $P_4+\ell P_1$, a contradiction. Hence $xy\in E(G)$.
\end{proof}

\begin{claim}\label{cla:boundonlargestchaininJa}
    For each $a\in\{1,2,\dots,\ell\}$, every strict inclusion chain in $\mathcal J_a$ has at most $k-1$ members. Therefore $\mathcal J_a$ can be partitioned into the union of at most $k-1$ antichains. 
\end{claim}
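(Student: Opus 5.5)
Suppose $J_{x_1}\subset J_{x_2}\subset\cdots\subset J_{x_k}$ is a strict chain of $k$ members of $\mathcal J_a$, where $x_1,\dots,x_k\in X_a$ are representatives. We will exhibit a $K_k$ inside a proper induced subgraph of $G$. Since $G$ is $k$-vertex-critical, $G$ contains no $K_k$ other than possibly $G$ itself. Because $G$ also contains the vertices of $U$ and $S$, it is not equal to $K_k$, so such a $K_k$ gives a contradiction.

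First, the representatives are distinct because their supports are distinct. Claim~\ref{cla:largesupportsinduceedge} applies to every pair $i<j$, since $J_{x_i}\subset J_{x_j}$ strictly, and so $x_ix_j\in E(G)$. Hence $\{x_1,\dots,x_k\}$ is a clique of size $k$ in $G$. Every vertex of $X_a$ is adjacent to $s_a$, so we could even extend the clique by $s_a$. Alternatively, $J_{x_1}$ is large, so it is a proper subset of $[m]$, and there is an index $i\notin J_{x_1}$. By Claim~\ref{cla:uniform}, $x_1$ is complete to $C_i$. Either way, the set $\{x_1,\dots,x_k\}$ already induces $K_k$ in $G$.

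To finish, the clique must lie in a proper induced subgraph. The vertex $s_a$ is not in it, since $s_a\in S$ and $x_j\in X=N(S)$. Thus $G-s_a$ contains $K_k$ and has chromatic number at least $k$. This contradicts $\chi(G-s_a)=k-1$. So every strict chain has at most $k-1$ members.

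For the second sentence, apply Mirsky's theorem (the dual of Dilworth's theorem) to the finite poset $(\mathcal J_a,\subseteq)$. If its longest chain has at most $k-1$ elements, the poset can be partitioned into at most $k-1$ antichains. Concretely, assign each support the length of the longest chain ending at it; each level is an antichain. There is no real obstacle here. The only point needing care is that the clique avoids some vertex of $G$, and this is witnessed by $s_a$.
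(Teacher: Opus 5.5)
Your proof is correct and matches the paper's: Claim~\ref{cla:largesupportsinduceedge} makes the representatives pairwise adjacent, the resulting clique gives a contradiction (the paper adds $s_a$ to get a $K_{k+1}$ against $\chi(G)=k$, while you use the $K_k$ inside $G-s_a$ against vertex-criticality, which is equivalent), and Mirsky's theorem then gives the antichain partition. The aside about an index $i\notin J_{x_1}$ and the component $C_i$ plays no role and can be deleted; its justification is also off, since $J_{x_1}$ is a proper subset of $[m]$ because of the strict chain, not because it is large.
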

\begin{proof}[Proof of Claim~\ref{cla:boundonlargestchaininJa}]
    Take a strict inclusion chain
    $J_{x_1}\subset J_{x_2}\subset\cdots\subset J_{x_p}$ in
    $\mathcal J_a$ for some $a\in\{1,2,\dots,\ell\}$ with $x_1,\dots,x_p\in X_a$ representing the chain's
    members. By Claim~\ref{cla:largesupportsinduceedge}., $x_i\sim x_{j}$ for every pair $1\le i<j\le p$.
    Thus, $\{x_1,\dots,x_p\}\cup\{s_a\}$ is a clique of size $p+1$. 
    If $p \ge k$, then $G$ contains a clique of size $k+1$, contradicting $G$ being $k$-colourable.
  
    Hence every strict inclusion chain in $\mathcal J_a$ has at most $k-1$ members.

     Now by Mirsky's Theorem~\cite{mirsky1971dual}, the minimum number of antichains to partition $\mathcal J_a$ is the length of the longest chain. 
     Thus $\mathcal J_a$ can be partitioned into at most $k-1$ antichains.  
 
\end{proof}

Now let $R=R^{(\ell+1)}_{\,2^{\ell (k-1)}}((k-1)\ell+2)$ be the Ramsey number as described in Section~\ref{sec:prelim}.
Note for any set $A$ with $|A| \geq R$ elements and any $2^{\ell (k-1)}$-colouring of the $(\ell+1)$-subsets of $A$ we have a subset $I \subseteq A$ with $|I| \geq (k-1)\ell+2$ such that every $(\ell+1)$-subset of $I$ was receives the same colour.

\begin{claim}\label{cla:ramsey}
Suppose $A\subseteq[m]$ has size at least $R$. There is a subset
$I\subseteq A$ of size $(k-1)\ell+2$ such that, for every $x\in X$, either
\[
I\subseteq J_x\quad\text{or}\quad|I\cap J_x|\le\ell.
\]
In the first case, $x$ is anticomplete to every component indexed by $I$.
\end{claim}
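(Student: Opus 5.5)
The plan is to define a colouring of the $(\ell+1)$-subsets of $A$ that records, for a bounded number of antichains of large supports, whether the subset lies inside some member of that antichain, and then to apply Ramsey's theorem with the number $R$ fixed above. Vertices $x\in X$ with $|J_x|\le\ell$ need no treatment, since $|I\cap J_x|\le|J_x|\le\ell$ for every $I$. So only large supports matter, and each of them belongs to some $\mathcal J_a$. By Claim~\ref{cla:boundonlargestchaininJa}, each $\mathcal J_a$ is a union of at most $k-1$ antichains. Altogether the large supports are covered by at most $\ell(k-1)$ antichains $\mathcal A_1,\dots,\mathcal A_{\ell(k-1)}$ (pad with empty families if needed).

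The key structural fact comes from Claim~\ref{cla:intersection}. Two distinct members of one antichain $\mathcal A_t$ lie in a common $\mathcal J_a$ and are incomparable, so they share at most $\ell-1$ indices. Hence every $\ell$-subset of $[m]$, and a fortiori every $(\ell+1)$-subset, is contained in at most one member of $\mathcal A_t$.

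Colour each $(\ell+1)$-subset $T\subseteq A$ by the vector $(b_1,\dots,b_{\ell(k-1)})\in\{0,1\}^{\ell(k-1)}$, where $b_t=1$ exactly when $T$ is contained in some member of $\mathcal A_t$. This uses at most $2^{\ell(k-1)}$ colours. Since $|A|\ge R$, Ramsey's theorem gives a set $I\subseteq A$ with $|I|=(k-1)\ell+2$ all of whose $(\ell+1)$-subsets receive the same vector.

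Now fix $t$ and consider the two possible values of $b_t$.

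If $b_t=0$, no $(\ell+1)$-subset of $I$ lies in any member of $\mathcal A_t$. Therefore every $J\in\mathcal A_t$ satisfies $|I\cap J|\le\ell$.

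If $b_t=1$, the main step is to show that $I$ lies entirely inside a single member of $\mathcal A_t$.
\begin{itemize}
\item Take two $(\ell+1)$-subsets $T,T'$ of $I$ with $|T\cap T'|=\ell$. Each lies in some member of $\mathcal A_t$, and those members both contain the $\ell$-set $T\cap T'$. By uniqueness they are the same member.
\item Since $|I|\ge\ell+2$, any two $(\ell+1)$-subsets of $I$ can be joined by a sequence of such single-element swaps. So one member $J^*\in\mathcal A_t$ contains every $(\ell+1)$-subset of $I$, and hence $I\subseteq J^*$.
\item Any other $J\in\mathcal A_t$ meets $J^*$ in at most $\ell-1$ indices, so $|I\cap J|\le\ell-1$.
\end{itemize}
This step is where $\ell\ge1$ and $|I|\ge\ell+2$ are used; I expect the connectivity and uniqueness argument to need the most care.

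In both cases, every large support $J_x$ lies in some $\mathcal A_t$ and therefore satisfies $I\subseteq J_x$ or $|I\cap J_x|\le\ell$, which is the dichotomy claimed. For the final sentence, suppose $I\subseteq J_x$. Then $|J_x|\ge|I|>\ell$, so $J_x$ is large. By Claim~\ref{cla:uniform}, $x$ is anticomplete to each component indexed by $J_x$, and in particular to each component indexed by $I$.
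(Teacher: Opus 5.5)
Your proposal is correct and follows essentially the same route as the paper. Both use the same $\{0,1\}^{\ell(k-1)}$-valued colouring of the $(\ell+1)$-subsets of $A$, indexed by the antichains from Claim~\ref{cla:boundonlargestchaininJa}, then Ramsey's theorem, then the single-element-swap argument via Claim~\ref{cla:intersection} to get $I\subseteq J^*$ for a unique member $J^*$ whenever a coordinate equals $1$; your justification of uniqueness through the $\ell$-set $T\cap T'$ is a slightly cleaner form of the paper's step $\pi(T)=\pi(T')$, and it even gives the sharper bound $|I\cap J|\le\ell-1$ for the other members of that antichain.
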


\begin{proof}[Proof of Claim~\ref{cla:ramsey}]
For every $x \in X$, if $|J_x| \leq \ell$ then $|I \cap J_x| \leq \ell$ regardless of the choice of $I$.
Moreover $I \not\subseteq J_x$ as $|I|=(k-1)\ell+2> \ell$.
Thus it suffices to only consider $x \in X$ such that $|J_x| > \ell$.

For each $a\in\{1,\dots,\ell\}$, partition $\mathcal{J}_a=\{J_x:|J_x|>\ell \text{ and } x\in X_a\}$ into the at most $k-1$ antichains given by Claim~\ref{cla:boundonlargestchaininJa}. There are at most
$\ell (k-1)$ such families in total, ranging over all $a$.
Label these families $\mathcal{F}_1, \ldots,  \mathcal{F}_{\ell (k-1)}$.

For every $T\subseteq A$ with $|T|=\ell+1$, assign $T$ a bit vector in $\{0,1\}^{\ell (k-1)}$ where $i^{th}$ coordinate of this bit vector is $1$ if and only if for some $J_x \in \mathcal{F}_i$ we have $T \subseteq J_x$.
Note there are at most $2^{\ell (k-1)}$ of these bit vectors in total. 
Since $|A|\ge R$ then by~\cite[Theorem~B]{Ramsey}, there exists a set $I\subseteq A$ of size $(k-1)\ell+2$ such that every $(\ell+1)$-subset of $I$ receives the same bit vector from $\{0,1\}^{\ell (k-1)}$. 

Now, for any $i\in \{1,2,\ldots,\ell (k-1)\}$, every $(\ell+1)$-subset of $I$ will have the same $i$th coordinate of their bit vector. 
Call the $i^{th}$ coordinate $\epsilon_i$.

Suppose $\epsilon_i=0$. 
By the definition of our bit vector, no $(\ell+1)$-subset of $I$ is contained in $J_x$ for any $J_x\in \mathcal{F}_i$.
Hence $I \not\subseteq J_x$.
Further, $J_x$ does not contain $\ell+1$ or more indices from $I$, so $|J_x \cap I| \leq \ell$.

So, suppose $\epsilon_i=1$, then every $(\ell+1)$-subset $T\subseteq I$ is contained in some
member of $\mathcal{F}_i$.
Let $I_{\ell+1}$ denote the set of all $(\ell+1)$-subsets of $I$ and define $\pi:I_{\ell+1}\to \mathcal F_i$ defined by $\pi(T)$ being a set $J_x\in \mathcal F_i$ with $T\subseteq J_x$. 
By Claim~\ref{cla:intersection} we have that $\pi$ is a well-defined function as distinct members of an antichain intersect in at most $\ell-1$ elements and hence $T$ is contained in a unique $\pi(T) \in \mathcal{F}_i$.
Now, for any $T,T'\in I_{\ell+1}$ differing in exactly one element, have $\pi(T)=\pi(T')$ by Claim~\ref{cla:intersection}.
Since any two $(\ell+1)$-subsets of $I$ can be connected by a sequence of successive one-element replacements, we have that all $(\ell+1)$-subset of $I$ map to the same set in $\mathcal{F}_i$ under $\pi$.
Let this set be $J^*$.
Note that $I\subseteq J^*$ since every element of $I$ is contained some member of $I_{\ell+1}$ and $T\subseteq J^*$ for every $T\in I_{\ell+1}$.
Thus, every other member $J \in \mathcal{F}_i$ has $|J \cap I| \leq |J\cap J^*|\le\ell$. 

Thus for every family $\mathcal F_i$ and $J_x \in \mathcal F_i$ we have that either $I \subseteq J_x$ or $|I \cap J_x| \leq \ell$.
Note for every $x \in X$ with $|J_x|> \ell$ there is a family $\mathcal F_i$ with $J_x \in \mathcal F_i$ and hence $I \subseteq J_x$ or $|I \cap J_x| \leq \ell$. 
Moreover if $|J_x| \leq \ell$ then $|I \cap J_x| \leq \ell$.
Thus for any $x \in X$ we have that either $I \subseteq J_x$ or $|I \cap J_x| \leq \ell$. 

Note when any $J_x$ satisfies $I \subseteq J_x$ we have that $|J_x| \geq |I| \geq \ell+2$.
Thus by Claim~\ref{cla:uniform}, we have that $x$ is complete or anticomplete to every component of $W$.
Since $J_x \neq \emptyset$, $x$ is anticomplete to each component indexed by $J_x$ and hence anticomplete to each component indexed by $I$.
\end{proof}

\begin{claim}\label{cla:boundmodulecomponents}
If $W$ is a module in $G[U]$ and $G[W]$ is disconnected, then $G[W]$ has
fewer than $M=(k-1)R$ connected components.
\end{claim}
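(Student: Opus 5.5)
We argue by contradiction: suppose $G[W]$ has $m\ge M=(k-1)R$ components $C_1,\dots,C_m$, and aim to produce two components violating Lemma~\ref{lem:XY}.

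\textbf{Step 1 (equal chromatic numbers).} Each $C_i$ is a proper induced subgraph of the $k$-vertex-critical graph $G$, so $1\le\chrom(C_i)\le k-1$. By pigeonhole there is $A\subseteq[m]$ with $|A|\ge R$ such that all $C_i$, $i\in A$, have the same chromatic number. This is exactly where the factor $k-1$ in $M$ is used.

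\textbf{Step 2 (homogenise $X$).} Apply Claim~\ref{cla:ramsey} to $A$. We obtain $I\subseteq A$ with $|I|=(k-1)\ell+2$ such that every $x\in X$ is of one of two types:
\begin{itemize}
\item \emph{type~1}: $I\subseteq J_x$, and then $x$ is anticomplete to every $C_i$ with $i\in I$;
\item \emph{type~2}: $|I\cap J_x|\le\ell$, so $x$ is complete to all but at most $\ell$ of the components indexed by $I$.
\end{itemize}

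\textbf{Step 3 (set up Lemma~\ref{lem:XY}).} For distinct $i,j\in I$ take $X'=V(C_i)$ and $Y=V(C_j)$. These are anticomplete (different components of $G[W]$) and $\chrom(C_i)=\chrom(C_j)$, so only the third condition must be checked. A vertex of $N(C_i)$ lies in $W$, in $U\setminus W$, in $S$, or in $X$.
\begin{itemize}
\item None lies in $W$, since $C_i$ is a component of $G[W]$.
\item None lies in $S$, since $S$ is anticomplete to $U$.
\item A vertex of $U\setminus W$ adjacent to $C_i$ is complete to $W$, because $W$ is a module of $G[U]$; hence it is complete to $C_j$.
\item Type-1 vertices of $X$ are not in $N(C_i)$.
\end{itemize}
So Lemma~\ref{lem:XY} yields a contradiction unless some type-2 vertex $x$ is adjacent to $C_i$ and has $j\in J_x$. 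Call such an $x$ a witness for the ordered pair $(i,j)$. It remains to show that some ordered pair in $I$ has no witness.

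\textbf{Step 4 (main obstacle: finding a witness-free pair).} A counting argument is needed here, and this is where I expect the difficulty.

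Fix $i\in I$. Each witness $x$ for a pair $(i,j)$ has $J_x\cap I$ of size at most $\ell$, so a single witness covers at most $\ell$ values of $j$. If every $j\in I\setminus\{i\}$ has a witness, we can greedily choose witnesses $x_1,x_2,\dots$ whose sets $J_{x_t}\cap I$ cover new indices. This yields at least $(|I|-1)/\ell>k-1$ witnesses, each complete to $C_i$ outside its small bad set.

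The plan is to show that these witnesses are pairwise adjacent, or adjacent after passing to a subfamily of size $k$. Together with a vertex of a component to which all of them are complete (one exists because $|I|=(k-1)\ell+2$ exceeds the total number of bad indices), they would then form a $K_{k+1}$, contradicting $k$-colourability. To get adjacency of two non-adjacent witnesses $x,y$, I would take:
\begin{itemize}
\item a vertex $w\in C_j$ with $j\in J_x\setminus J_y$ and $w\nsim x$;
\item the common neighbour structure through $C_i$, or through $s_a$ when $x,y\in X_a$;
\item $\ell$ further vertices from components of $I$ to which both $x$ and $y$ are anticomplete, or from components far from the path.
\end{itemize}
These should give an induced $P_4+\ell P_1$, mirroring the proofs of Claim~\ref{cla:intersection} and Claim~\ref{cla:largesupportsinduceedge}.

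Handling witnesses that are mixed on $C_i$ or on $C_j$ will require Lemma~\ref{lem:edgemixhomo}, which supplies an edge of the component on which the vertex is mixed; this edge extends $x$ to an induced $P_4$. Getting the case analysis (same or different $X_a$, mixed or uniform on $C_i$) to produce the $\ell$ isolated vertices consistently is the step I expect to be hardest. If it fails, the fallback is to choose $X'$ and $Y$ as unions of several components indexed by $I$ rather than single components.

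Once a witness-free pair exists, Lemma~\ref{lem:XY} gives the contradiction, so $m<M$.
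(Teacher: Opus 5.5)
Your Steps 1--3 match the paper's setup: pigeonhole on $\chi(G[C_i])$, Claim~\ref{cla:ramsey} to obtain $I$, and Lemma~\ref{lem:XY} when no $x\in X$ with $|J_x\cap I|\le\ell$ touches the component. Step~4, however, is a genuine gap, and the mechanism you propose cannot work, because nothing in your setup forces witnesses to be adjacent.

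For example, let $x_t\in X_a$ ($t\in I$) be pairwise non-adjacent with $J_{x_t}=\{t\}$. Then $x_j$ is a witness for every ordered pair $(i,j)$. Yet an induced $P_4$ such as $u,x_{t'},s_a,x_t$ with $u\in C_t$ cannot be extended by vertices of $W$, since every vertex of $W$ is adjacent to $x_t$ or to $x_{t'}$. With single-vertex components, the graph on $\{s_a\}\cup\{x_t\}\cup\bigcup_{t\in I}C_t$ is bipartite and $(P_4+P_1)$-free. So no argument using only $(P_4+\ell P_1)$-freeness and clique size can make the witnesses pairwise adjacent. The underlying reason is that type-2 vertices are complete to all but at most $\ell$ components of $I$, so the $\ell$ isolated vertices that drove Claims~\ref{cla:intersection} and~\ref{cla:largesupportsinduceedge} are not available. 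Your fallback fails on the same configuration: if $Y\supseteq C_j$ and $X'$ contains any other component, then $x_j\in N(X')$ is not complete to $Y$. Criticality has to be used through an actual colouring, not only through Lemma~\ref{lem:XY} applied to components.

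The missing idea is to group type-2 vertices by colour rather than by adjacency. Fix $i_0\in I$ and let $\varphi$ be a $(k-1)$-colouring of $G-C_{i_0}$. Call a colour \emph{exceptional} if it appears on some $x\in X$ with $|J_x\cap I|\le\ell$, and pick one such representative $x_c$ per exceptional colour $c$. If there is no exceptional colour, every vertex of $X$ is type~1 and your Step~3 finishes. Otherwise:
\begin{itemize}
\item The at most $k-1$ representatives have bad sets covering at most $(k-1)\ell$ indices of $I$. Since $|I|=(k-1)\ell+2$, there is $j\in I\setminus\{i_0\}$ with $C_j$ complete to every $x_c$.
\item Hence no exceptional colour occurs on $C_j$. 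Since $\chi(G[C_j])=r=\chi(G[C_{i_0}])$, the colouring of $C_j$ supplies a palette $P$ of $r$ non-exceptional colours.
\item Neighbours of $C_{i_0}$ lie in $X\cup(U\setminus W)$, and none uses a colour of $P$. A vertex of $X$ with a non-exceptional colour has $|J_x\cap I|>\ell$, hence $I\subseteq J_x$, so it is anticomplete to $C_{i_0}$. A vertex of $U\setminus W$ adjacent to $C_{i_0}$ is complete to $C_j$, so its colour is not in $P$.
\item Recolouring $C_{i_0}$ with $P$ extends $\varphi$ to a $(k-1)$-colouring of $G$, a contradiction.
\end{itemize}
Your count that more than $k-1$ witnesses are needed to cover $I\setminus\{i\}$ is the right instinct. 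The point is that the at most $k-1$ colour classes of $\varphi$ play the role you wanted $k$ pairwise adjacent witnesses to play.
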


\begin{proof}[Proof of Claim~\ref{cla:boundmodulecomponents}]
To show a contradiction suppose that $G[W]$ has at least $(k-1)R$ connected
components, with vertex sets $C_1,\dots,C_m$, for some $m\ge (k-1)R$.

Each component $C_i$ satisfies $\chrom(G[C_i])\in\{1,\dots,k-1\}$, thus by the pigeonhole principle, since $m\ge (k-1)R$, we must have at least $R$ of the components with the same chromatic number.
Let $r\in\{1,\dots,k-1\}$ and $A\subseteq[m]$ be such that $|A|\ge R$ and $\chrom(G[C_i])=r$ for each $i \in A$.
By Claim~\ref{cla:ramsey} there exists a subset $I\subseteq A$ of size $(k-1)\ell+2$ such that every $x\in X$ either has $I\subseteq J_x$ 
or $|I\cap J_x|\le\ell$. 
Now, fix $i_0\in I$ and delete the whole component $C_{i_0}$.
Since $G$ is $k$-vertex-critical, then $G-C_{i_0}$ has a proper
$(k-1)$-colouring, say $\varphi$.
We call a colour $c\in\{1,\dots,k-1\}$ \emph{exceptional} if there exists an $x\in X$ that received colour $c$ under $\varphi$ and is such that $|J_x\cap I|\le\ell$. 
That is, $c$ is exceptional if $\varphi(x)=c$ and $x$ is complete to at least $|I|-\ell=(k-2)\ell+2$ components indexed by $I$.
If there are no exceptional colours, then for each $x\in X$, $|J_x\cap I|>\ell$ and therefore $|J_x|>\ell$. 
Thus, by Claim~\ref{cla:uniform}, each $x\in X$ is either complete or anticomplete to each component of to $C_i$ for each $i\in I$.
Further, from our choice of $I$, we must have $I\subseteq J_x$ for all $x\in X$, and therefore $X$ is anticomplete to $C_i$ for each $i\in I$.
Since $W$ is a module in $G[U]$ and $C_i\subseteq W$ is also anticomplete to $S\cup X$, it follows that $N(C_i)=N(C_j)$ for all $i,j\in I$ with $i\neq j$.
By Lemma~\ref{lem:XY}, this contradicts $G$ being $k$-vertex-critical.

Therefore, we may suppose that there is at least one exceptional colour.
For each exceptional colour $c$, choose one vertex $x_c\in X$ with $\varphi(x_c)=c$ and let
\[
D=\bigcup_{c\text{ exceptional}}(J_{x_c}\cap I).
\]

Recall $J_{x_c}$ is the set of indices of components not complete to $x_c$.
Thus any $j\in I\setminus(D\cup\{i_0\})$ is such that $C_j$ is complete to each $x_c$ and hence can not have any vertex coloured $c$.
Now there are at most $k-1$ exceptional colours (one per colour class).
So, $|D|\le (k-1)\ell$ since each contributes at most $\ell$ indices to $D$.
Moreover, $|I|=(k-1)\ell+2$ so there is an index $j\in I\setminus(D\cup\{i_0\})$ such that no exceptional colour occurs on any vertex of $C_j$ under $\varphi$.
Note that the restriction of $\varphi$ to $C_j$ is a proper colouring of $G[C_j]$ using non-exceptional colours.
Since $\chrom(G[C_j])=r$, then $\varphi$ restricted to $C_j$ uses at least $r$ distinct colours.
Choose $r$ of these colours that appear on $C_j$, forming a palette $P$
with $|P|=r$ with no member of $P$ being exceptional.

We now claim that no vertex in $N(C_{i_0})$ uses a colour of $P$ under $\varphi$.
Note that $N(C_{i_0}) \subseteq X \cup (U\setminus W)$, so we consider two cases.

\begin{itemize}
\item[Case 1.] Suppose $x\in X$ with $\varphi(x)\in P$. 
Since no colour of $P$ is exceptional, $\varphi(x)$ is not exceptional and hence $|J_x\cap I|>\ell$. 
By Claim~\ref{cla:ramsey}, we must then have that $I\subseteq J_x$ and $x$ is then anticomplete to every component indexed by $I$, in particular to $C_{i_0}$.
So, $x$ is not a neighbour of any vertex of $C_{i_0}$ as desired.

\item[Case 2.] Suppose $z\in U\setminus W$ with $\varphi(z) \in P$ and suppose by way of contradiction that $z$ has a neighbour in $C_{i_0}$. Then $z$ is not anticomplete to $W$. 
Since $W$ is a module in $G[U]$ then $z$ is complete to $W$.
Thus, $z$ is complete to $C_j\subseteq W$ and $\varphi(z) \notin P$, a contradiction.
\end{itemize}

Since $\chrom(G[C_{i_0}])=r$ and $|P|=r$, there is a proper colouring of $G[C_{i_0}]$ using exactly the colours of $P$. 
Combine this with the $(k-1)$-colouring $\varphi$ on $V(G)\setminus C_{i_0}$.
This proper colouring of $G$ uses only $k-1$ colours, contradicting $\chrom(G)=k$.
\end{proof}

Claim~\ref{cla:boundmodulecomponents} now bounds the number of components in any module of $G[U]$, and therefore the number of children any vertex with label 0 can have in $\mathcal T$. 
Thus, $\mathcal T$ has order bounded by a constant depending only on $k$ and $\ell$.
Since every vertex of $U$ is a leaf of $\mathcal T$, it follows that $|U|$ is also bounded by a constant depending only on $k$ and $\ell$.
Since $U$ was defined as the non-neighbourhood of an arbitrary independent set of size $\ell$ in $G$, it follows that the independence number is bounded by $\ell +|U|$.
Applying Ramsey's Theorem now completes the proof. 

\end{proof}

\section{Conclusion}\label{sec:conc}
In this paper, we completely determined for which graphs $H$ there are only finitely many $k$-vertex-critical $H$-free graphs for all $k\ge 5$. 
As $H$-free graph classes are among the most restrictive ways to define hereditary graph classes, the following open problem naturally generalizes the efforts that culminated in our dichotomy theorem.

\begin{openproblem}\label{prob:hereditary}
    For which values of $k>3$ and which hereditary classes of graphs $\mathcal H$ are there only finitely many $k$-vertex-critical graphs in $\mathcal H$?
\end{openproblem}

Many known results toward Open Problem~\ref{prob:hereditary} for $k>4$ were discussed in Section~\ref{sec:intro} for $(P_5,H)$-free graphs, but Theorem~\ref{thm:4critdichot} also strongly motivates considering subclasses of $P_6$-free graphs and of $2P_3$-free graphs.
In fact, showing that there are only finitely many $(k+1)$-vertex-critical graphs in a subclass of $P_6$-free graphs for any $k>4$ could lead to new polynomial-time \kcol{} algorithms, as \kcol{} for $P_6$-free graphs remains NP-complete for all $k>4$~\cite{Huang2016}.
Aside from the noted subclasses of $P_5$-free graphs, it is known that there are only finitely many $5$-vertex-critical $(P_6,H)$-free graphs when $H=$ banner~\cite{Huang2019} or $H=$ bull~\cite{5critP6bullfree}.
On the other hand, the class of $rP_3$-free graphs was recently shown to admit polynomial-time \kcol{} algorithms for all $k$ and $r$~\cite{ChudnovskyHajebiSpirkl2024}. 
These algorithms, together with the fact that there are infinitely many $k$-vertex-critical $2P_2$-free graphs for all $k\ge 5$, strongly motivate studying $k$-vertex-critical graphs in subclasses of $2P_3$-free graphs for $k>4$.
Other than the subclasses of $2P_2$-free graphs considered in~\cite{Adekanye2024IWOCA}, vertex-critical graphs have not been studied in any other subclasses of $2P_3$-free graphs.
Thus, as a subproblem of Open Problem~\ref{prob:hereditary}, we pose the following.

\begin{openproblem}
    For which graphs $H$ are there only finitely many $k$-vertex-critical $(2P_3,H)$-free graphs for all $k\ge 1$?
\end{openproblem}

Since the dichotomy of Theorem~\ref{thm:4critdichot} applies only to $H$-free graphs, Open Problem~\ref{prob:hereditary} is also open for $k=4$.
Hereditary classes for which it is known that there are only finitely many $4$-vertex-critical graphs include $(P_7,K_3)$-free graphs~\cite{criticalP7K3free}, which was very recently extended by showing that there are exactly three such graphs~\cite{zhouetal2025P11C3freecritical}.
Also in~\cite{zhouetal2025P11C3freecritical}, an infinite class of $4$-vertex-critical $(4K_2,K_3)$-free graphs was constructed, and the problem of finding the smallest $t$ such that there are only finitely many $4$-vertex-critical $(P_t,K_3)$-free graphs was posed.
Thus, the answer lies somewhere between $t=7$ and $t=10$.
For $t=8$, it is known that $(P_8,K_3,C_4)$-free graphs and $(P_8,K_3,C_5)$-free graphs are $3$-colourable~\cite{ChudnovskyStacho2018}.

The question for $4$-vertex-critical $(P_t,K_3)$-free graphs is a subproblem of determining the number of $k$-vertex-critical $(H,K_n)$-free graphs for various graphs $H$ and $n<k$. 
One of only three open cases of an open problem posed in~\cite{KCameron2021} to determine exactly which graphs $H$ of order $5$ admit only finitely many $k$-vertex-critical $(P_5, H)$-free graphs for any $k\ge 5$ is $H=K_5$ (see~\cite{BeatonCameron2026P5Hbipartite} for the reduction to these three cases).
In fact, it follows directly from~\cite{Hoang2015} that there are infinitely many $5$-vertex-critical $(P_5,K_5)$-free graphs. 
This is especially interesting given that, for $k\ge 16$, there are no $k$-vertex-critical $(P_5,K_5)$-free graphs since all such graphs are $15$-colourable~\cite{Esperetetal2013}.
The somewhat surprising behaviour of the number of $k$-vertex-critical $(P_5,K_5)$-free graphs as $k$ increases is easily explained by the fact that $\chi(K_5)=5$, and therefore forbidding $K_5$ removes at most one of the infinitely many $5$-vertex-critical $P_5$-free graphs. 
The fact that there are no $16$-chromatic $(P_5,K_5)$-free graphs makes this case somewhat of a technicality.
This prompts us to pose the following open problem. 

\begin{openproblem}\label{prob:infinitethenfinite}
    Fix $k\ge 4$. Is there a finite list of nonisomorphic graphs $H_1,H_2,\dots H_n$ with $\chi(H_i)<k$ for all $i\in\{1,2,\dots,n\}$ such that the class of $(H_1,H_2,\dots,H_n)$-free graphs contains, for some $m\ge k$, infinitely many $m$-vertex-critical graphs and $(m+1)$-chromatic graphs, but only finitely many $(m+1)$-vertex-critical graphs?
\end{openproblem}

Clearly, the answer to Open Problem~\ref{prob:infinitethenfinite} is ``no'' if $H_i$ has no universal vertices for each $i\in\{1,\dots,n\}$ since joining a vertex to each graph in an infinite class of $m$-vertex-critical $(H_1,H_2,\dots,H_n)$-free graphs will produce an infinite class of $(m+1)$-vertex-critical $(H_1,H_2,\dots,H_n)$-free graphs.

\section*{Acknowledgments}
The research of Iain Beaton was supported by the Natural Sciences and Engineering Research Council of Canada (NSERC) grants RGPIN-2025-06012 and DGECR-2025-00001.
The research of Ben Cameron was also supported by NSERC, grants RGPIN-2022-03697 and DGECR-2022-00446. 

\section*{Declaration of AI-Assisted Content}

ChatGPT 6 Astra (referred to as the model throughout this section) was used in the development of the results of this paper. 
On September 1, 2026, the second and third author spent a few hours prompting the model to construct new infinite classes of $k$-vertex-critical $P_5$-free graphs.
We provided the following papers~\cite{CameronHoang2023P5C5,BeatonCameron2025cogemfreeord4finite,BeatonCameron2026IWOCA,BeatonCameron2026P5Hbipartite} for constructions and structure of the infinite classes that are already known.
After much back-and-forth, it did provide one for each $k\ge 7$, albeit without new forbidden induced subgraphs. 
This encouraged us to the model's potential and the third author then began a multi-day session with the model with to attack the open problems in this area for both sides, i.e., attempt to construct infinite classes of $k$-vertex-critical graphs with new forbidden attempt to prove new finiteness results for the vertex-critical $P_5$-free and $(P_4+\ell P_1)$-free graphs. 
For finiteness, the third author directed the model to simultaneously consider building on the results the first and second author recently obtained in~\cite{BeatonCameron2026P5Hbipartite} by starting with a large induced half-graph or crown-graph, and to try ``new'' ideas.

On September 9, 2026, the model returned a very elegant and simple proof that there are only finitely many $k$-vertex-critical  $(P_4+\ell P_1,K_3)$-free graphs for all $k$, solving a persistent open problem noted by the first and second authors in~\cite{BeatonCameron2026IWOCA}.
With this, the second and third author began another concerted effort with the model to solve the bigger open problem 
(i.e., Theorem~\ref{thm:P4UellP1}) graphs with an hour-long session to try and extend the triangle-free proof to general $K_n$-free graphs, and to guide it toward what we determined to be the most promising partial results it produced to that point.
The third author then continued another multi-day session with the model and the their independently developed computational models. 
On September 14, 2026, this produced a 27-page proof of Theorem~\ref{thm:P4UellP1}. 
This proof was then completely rewritten by the first and second author to the 4-page version included in this paper. 
The authors did fix some small holes in the arguments generated by the model, but the key ideas in the proof (i.e., bounding the order of the co-tree in the non-neighbourhood of an arbitrary $\ell$-independent set and the ideas to bound the number of children of the 0-nodes) are those generated by the model.

The introduction, conclusion, and this declaration were developed and written by the first and second authors, with the model being used only for a last pass in the review process and only to find uncaught grammatical and typographical errors.
The authors take full responsibility for the contents of this paper and its correctness.

\bibliographystyle{abbrv}
\bibliography{refs}

\begin{thebibliography}{10}

\bibitem{AbuadasCameronHoangSawada2024}
T.~Abuadas, B.~Cameron, C.~T. Hoàng, and J.~Sawada.
\newblock Vertex-critical $({P}_3+\ell {P}_1)$-free and vertex-critical (gem,
  co-gem)-free graphs.
\newblock {\em Discrete Appl. Math.}, 344:179--187, 2024.

\bibitem{Adekanye2024IWOCA}
M.~Adekanye, C.~Bury, B.~Cameron, and T.~Knodel.
\newblock {On the Finiteness of k-Vertex-Critical $2P_2$-Free Graphs with
  Forbidden Induced Squids or Bulls}.
\newblock In A.~A. Rescigno and U.~Vaccaro, editors, {\em Combinatorial
  Algorithms}, pages 301--313, Cham, 2024. Springer Nature Switzerland.

\bibitem{BeatonCameron2025cogemfreeord4finite}
I.~Beaton and B.~Cameron.
\newblock Vertex-critical graphs in co-gem-free graphs.
\newblock {\em Theoretical Computer Science}, 1042:115234, 2025.

\bibitem{BeatonCameron2026P5Hbipartite}
I.~Beaton and B.~Cameron.
\newblock {A dichotomy for the number of vertex-critical ($P_5$, $H$)-free
  graphs when $H$ is bipartite}.
\newblock {\em preprint, arXiv:2608.20045 [math.CO]}, 2026.

\bibitem{BeatonCameron2026IWOCA}
I.~Beaton and B.~Cameron.
\newblock Vertex-critical graphs in subfamilies of $({P}_4+\ell {P}_1)$-free
  graphs.
\newblock In F.~Foucaud and A.~Parreau, editors, {\em Combinatorial
  Algorithms}, pages 118--130, Cham, 2026. Springer Nature Switzerland.

\bibitem{BelavadiHoang2026}
M.~Belavadi and C.~T. Hoàng.
\newblock {Structural description of (bull, house)-free graphs}.
\newblock {\em preprint, arXiv:2604.27594 [math.CO]}, 2026.

\bibitem{KarthickBelavadi2026}
M.~Belavadi and T.~Karthick.
\newblock {Vertex-critical co-gem-free graphs}.
\newblock {\em preprint, arXiv:2606.11757 [math.CO]}, 2026.

\bibitem{criticalP7K3free}
F.~Bonomo-Braberman, M.~Chudnovsky, J.~Goedgebeur, P.~Maceli, O.~Schaudt,
  M.~Stein, and M.~Zhong.
\newblock Better 3-coloring algorithms: Excluding a triangle and a seven vertex
  path.
\newblock {\em Theoretical Computer Science}, 850:98--115, 2021.

\bibitem{Brause2022}
C.~Brause, M.~Gei{\ss}er, and I.~Schiermeyer.
\newblock Homogeneous sets, clique-separators, critical graphs, and optimal
  $\chi$-binding functions.
\newblock {\em Discrete Appl. Math.}, 320:211--222, 2022.

\bibitem{Bruce2009}
D.~Bruce, C.~T. Ho{\`a}ng, and J.~Sawada.
\newblock A certifying algorithm for 3-colorability of {$P_5$}-free graphs.
\newblock In Y.~Dong, D.-Z. Du, and O.~Ibarra, editors, {\em Algorithms and
  Computation}, pages 594--604, Berlin, Heidelberg, 2009. Springer Berlin
  Heidelberg.

\bibitem{CaiGoedgebeurHuang2023}
Q.~Cai, J.~Goedgebeur, and S.~Huang.
\newblock {Some results on $k$-critical $P_5$-free graphs}.
\newblock {\em Discrete Appl. Math.}, 334:91--100, 2023.

\bibitem{P5banner2019}
Q.~Cai, S.~Huang, T.~Li, and Y.~Shi.
\newblock Vertex-critical $({P_5},banner)$-free graphs.
\newblock In {\em Frontiers in Algorithmics - 13th International Workshop, FAW
  2019, Sanya, China, April 29 - May 3, 2019, Proceedings}, volume 11458, pages
  111--120. Springer, 2019.

\bibitem{CameronHoang2023P5C5}
B.~Cameron and C.~T. Hoàng.
\newblock Infinite families of $k$-vertex-critical $({P}_5, {C}_5)$-free
  graphs.
\newblock {\em Graphs Combin.}, 40:30, 2024.

\bibitem{CameronHoangSawada2022}
B.~Cameron, C.~T. Hoàng, and J.~Sawada.
\newblock {Dichotomizing $k$-vertex-critical $H$-free graphs for $H$ of order
  four}.
\newblock {\em Discrete Appl. Math.}, 312:106--115, 2022.
\newblock Ninth Workshop on Graph Classes, Optimization, and Width Parameters.

\bibitem{KCameron2021}
K.~Cameron, J.~Goedgebeur, S.~Huang, and Y.~Shi.
\newblock $k$-{C}ritical graphs in {$P_5$}-free graphs.
\newblock {\em Theoret. Comput. Sci.}, 864:80--91, 2021.

\bibitem{Chud4critical2020}
M.~Chudnovsky, J.~Goedgebeur, O.~Schaudt, and M.~Zhong.
\newblock {Obstructions for three-coloring and list three-coloring $H$-free
  graphs}.
\newblock {\em SIAM J. Discrete Math.}, 34(1):431--469, 2020.

\bibitem{Chudnovsky4criticalconnected2020}
M.~Chudnovsky, J.~Goedgebeur, O.~Schaudt, and M.~Zhong.
\newblock Obstructions for three-coloring graphs without induced paths on six
  vertices.
\newblock {\em J. Combin., Ser. B}, 140:45--83, 2020.

\bibitem{ChudnovskyHajebiSpirkl2024}
M.~Chudnovsky, S.~Hajebi, and S.~Spirkl.
\newblock {List-$k$-Coloring $H$-Free Graphs for All $k>4$}.
\newblock {\em Algorithmica}, (to appear), 2024.

\bibitem{ChudnovskyStacho2018}
M.~Chudnovsky and J.~Stacho.
\newblock {$3$-Colorable Subclasses of $P_8$-Free Graphs}.
\newblock {\em SIAM Journal on Discrete Mathematics}, 32(2):1111--1138, 2018.

\bibitem{CORNEIL1981163}
D.~Corneil, H.~Lerchs, and L.~Burlingham.
\newblock Complement reducible graphs.
\newblock {\em Discrete Applied Mathematics}, 3(3):163--174, 1981.

\bibitem{Couturier2015}
J.~F. Couturier, P.~A. Golovach, D.~Kratsch, and D.~Paulusma.
\newblock {List Coloring in the Absence of a Linear Forest}.
\newblock {\em Algorithmica}, 71(1):21--35, 2015.

\bibitem{Dhaliwal2017}
H.~S. Dhaliwal, A.~M. Hamel, C.~T. Ho{\`{a}}ng, F.~Maffray, T.~J.~D. McConnell,
  and S.~A. Panait.
\newblock {On color-critical $(P_5,$co-$P_5)$-free graphs}.
\newblock {\em Discrete Appl. Math.}, 216:142--148, 2017.

\bibitem{Erdos}
P.~Erd\H{o}s.
\newblock Graph theory and probability.
\newblock {\em Canadian J. Math.}, 11:34--38, 1959.

\bibitem{Esperetetal2013}
L.~Esperet, L.~Lemoine, F.~Maffray, and G.~Morel.
\newblock {The chromatic number of $\{P_5,K_4\}$-free graphs}.
\newblock {\em Discrete Mathematics}, 313(6):743--754, 2013.

\bibitem{HajebiLiSpirkl2022}
S.~Hajebi, Y.~Li, and S.~Spirkl.
\newblock {Complexity Dichotomy for List-5-Coloring with a Forbidden Induced
  Subgraph}.
\newblock {\em SIAM J. Discrete Math.}, 36(3):2004--2027, 2022.

\bibitem{Hoang2010}
C.~T. Ho{\`{a}}ng, M.~Kami{\'{n}}ski, V.~Lozin, J.~Sawada, and X.~Shu.
\newblock Deciding $k$-colorability of {$P_5$}-free graphs in polynomial time.
\newblock {\em Algorithmica}, 57:74--81, 2010.

\bibitem{Hoang2015}
C.~T. Ho{\`{a}}ng, B.~Moore, D.~Recoskie, J.~Sawada, and M.~Vatshelle.
\newblock {Constructions of $k$-critical $P_5$-free graphs}.
\newblock {\em Discrete Appl. Math.}, 182:91--98, 2015.

\bibitem{Huang2016}
S.~Huang.
\newblock {Improved complexity results on $k$-coloring $P_t$-free graphs}.
\newblock {\em European J. Combin.}, 51:336--346, 2016.

\bibitem{Huang2019}
S.~Huang, T.~Li, and Y.~Shi.
\newblock {Critical $(P_6,banner) $-free graphs}.
\newblock {\em Discrete Appl. Math.}, 258:143--151, 2019.

\bibitem{Jooken2026}
J.~Jooken.
\newblock {Vertex-critical $(P_5,\text{chair})$-free and
  $(P_5,\text{cricket})$-free graphs}.
\newblock {\em preprint, arXiv:2605.28537 [math.CO]}, 2026.

\bibitem{5critP6bullfree}
Y.~Ju, J.~Jooken, J.~Goedgebeur, and S.~Huang.
\newblock {There Are Finitely Many $5$-Vertex-Critical $( P_6 , bull)$-Free
  Graphs}.
\newblock {\em Journal of Graph Theory}, 112(3):255--266, 2026.

\bibitem{Kaminski2019}
M.~Kamiński and A.~Pstrucha.
\newblock Certifying coloring algorithms for graphs without long induced paths.
\newblock {\em Discrete Appl. Math.}, 261:258--267, 2019.

\bibitem{Karp1972}
R.~M. Karp.
\newblock {Reducibility among combinatorial problems}.
\newblock In {\em Complexity of computer computations ({P}roc. {S}ympos., {IBM}
  {T}homas {J}. {W}atson {R}es. {C}enter, {Y}orktown {H}eights, {N}.{Y}.,
  1972)}, pages 85--103, 1972.

\bibitem{MaffrayMorel2012}
F.~Maffray and G.~Morel.
\newblock On $3$-colorable {$P_5$}-free graphs.
\newblock {\em SIAM J. Discrete Math.}, 26(4):1682--1708, 2012.

\bibitem{McConnell2011}
R.~McConnell, K.~Mehlhorn, S.~Näher, and P.~Schweitzer.
\newblock Certifying algorithms.
\newblock {\em Comput. Sci. Rev.}, 5(2):119--161, 2011.

\bibitem{mirsky1971dual}
L.~Mirsky.
\newblock A dual of {D}ilworth's theorem.
\newblock {\em The American Mathematical Monthly}, 78(8):876--877, 1971.

\bibitem{Ramsey}
F.~P. Ramsey.
\newblock {On a problem of formal logic}.
\newblock {\em Proc. Lond. Math. Soc.}, (s2-30):264--286, 1928.

\bibitem{RanderathSchiermeyer2010}
B.~Randerath and I.~Schiermeyer.
\newblock {$3$-Colorability $\in$P for $P_6$-free graphs}.
\newblock {\em Discrete Applied Mathematics}, 136(2):299--313, 2004.
\newblock The 1st Cologne-Twente Workshop on Graphs and Combinatorial
  Optimization.

\bibitem{WestGraphTheoryBook}
D.~B. West.
\newblock {\em {Introduction to Graph Theory}}.
\newblock Prentice Hall, Inc., Upper Saddle River, NJ, 1996.

\bibitem{P5W4Journal}
W.~Xia, J.~Jooken, J.~Goedgebeur, I.~Beaton, B.~Cameron, and S.~Huang.
\newblock {Vertex-critical $(P_5, W_4)$-free graphs}.
\newblock {\em Theoretical Computer Science}, 1075:115934, 2026.

\bibitem{Xiaetal2023}
W.~Xia, J.~Jooken, J.~Goedgebeur, and S.~Huang.
\newblock {Critical $(P_5,dart)$-Free Graphs}.
\newblock In {\em Combinatorial Optimization and Applications: 16th
  International Conference, COCOA 2023, Hawaii, HI, USA, December 15–17,
  2023, Proceedings, Part II}, page 390–402, Berlin, Heidelberg, 2023.
  Springer-Verlag.

\bibitem{xia2024results}
W.~Xia, J.~Jooken, J.~Goedgebeur, and S.~Huang.
\newblock {Some Results on Critical $(P_5,H)$-free Graphs}.
\newblock {\em Theoretical Computer Science}, 1051:115411, 2025.

\bibitem{zhouetal2025P11C3freecritical}
Y.~Zhou, J.~Jooken, B.~Shan, J.~Goedgebeur, and S.~Huang.
\newblock {Three-coloring triangle-free graphs without long forbidden paths}.
\newblock {\em preprint, arXiv:2512.12349 [math.CO]}, 2025.

\end{thebibliography}
\end{document}